\newcounter{alphthm}
\theoremstyle{plain}
\newtheorem{thm}{Theorem}[section]
 \newtheorem{exam}[thm]{Example}
 \newtheorem{prop}[thm]{Proposition}
 \theoremstyle{definition}
 \newtheorem{defn}[thm]{Definition}
 \theoremstyle{remark}
 \newtheorem{rem}[thm]{Remark}
 \numberwithin{equation}{subsection}
\begin{document}

\title {Two Fixed-Point Theorems For Special Mappings
\footnote{2000 {\it Mathematics Subject Classification}:
 Primary 46J10, 46J15, 47H10.} }

\author{ A. Beiranvand, S. Moradi\footnote{First author}, M. Omid and H. Pazandeh\\\\
Faculty  of Science, Department of Mathematics\\
Arak University, Arak,  Iran\\
\date{}
 } \maketitle
\begin{abstract}
In this paper, we study the existence of fixed points for
 mappings defined on complete (compact) metric space ($X,d$)
 satisfying a general contractive (contraction) inequality depended
 on another function. These conditions are analogous to Banach
 conditions.

\end{abstract}

\textbf{Keywords:} Fixed point, contraction mapping, contractive
mapping, sequentially convergent, subsequentially convergent.

\section{Introduction}
The first important result on fixed points for contractive-type
 mapping was the well-known Banach's Contraction Principle
 appeared in explicit form in Banach's thesis in 1922, where it was
 used to establish the existence of a solution for an integral
 equation. This paper published for the first time in 1922 in [1].
 In the general setting of complete metric spaces, this theorem
 runs as follows (see [3, Theorem 2.1] or [8, Theorem 1.2.2]).

\begin{thm}$($\textbf{Banach's Contraction Principle}$)$
 Let $(X,d)$ be a complete metric space and $S:X \longrightarrow
 X$ be a contraction $($there exists $k \in ]0,1[$ such that for
 each $x,y \in X$; $d(Sx,Sy) \leq kd(x,y)$$)$. Then $S$ has a unique
 fixed point in $X$, and for each $x_{0} \in X$ the sequence of
 iterates $\{S^{n}x_{0}\}$ converges to this fixed point.
\end{thm}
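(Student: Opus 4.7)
The plan is to prove existence, uniqueness, and convergence of the Picard iterates in a single sweep by showing that for an arbitrary starting point the sequence of iterates is Cauchy. So first I would fix $x_0 \in X$, set $x_n = S^n x_0$, and iterate the contraction inequality to obtain the bound $d(x_{n+1},x_n) \leq k\, d(x_n,x_{n-1}) \leq \cdots \leq k^n d(x_1,x_0)$. Then for $m > n$ the triangle inequality combined with a geometric series sum gives
\[
d(x_m,x_n) \leq \sum_{i=n}^{m-1} k^i d(x_1,x_0) \leq \frac{k^n}{1-k}\, d(x_1,x_0),
\]
which tends to $0$ as $n \to \infty$. This is the only step that requires any real work.

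Completeness of $(X,d)$ then furnishes a limit $x^* \in X$. Since $S$ is $k$-Lipschitz, it is continuous, so passing to the limit in the recursion $x_{n+1} = S x_n$ yields $Sx^* = x^*$. This gives both the existence of a fixed point and the convergence statement for the iterates from an arbitrary initial $x_0$.

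For uniqueness I would apply the contraction inequality directly to two putative fixed points $x^*, y^*$: $d(x^*,y^*) = d(Sx^*, Sy^*) \leq k\, d(x^*,y^*)$. Because $k < 1$, this forces $d(x^*, y^*) = 0$, hence $x^* = y^*$.

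The main, and really only, obstacle is the Cauchy estimate; once the geometric bound above is in hand, the rest of the proof is essentially a formality. No extra hypotheses beyond completeness, the Lipschitz bound, and $k < 1$ are used, and each is invoked exactly once: completeness for extracting the limit, the Lipschitz bound for continuity and for the telescoping estimate, and $k < 1$ both for summing the geometric series and for ruling out multiple fixed points.
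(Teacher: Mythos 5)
Your proof is correct, and it is the classical argument: Picard iterates, a geometric Cauchy estimate, completeness, continuity of $S$ to identify the limit as a fixed point, and the contraction inequality for uniqueness. Note that the paper never proves Theorem 1.1 directly; it is stated as a known result and then recovered (Remark 2.7) as the special case $T=\mathrm{id}$ of Theorem 2.6. Specializing that proof to $T=\mathrm{id}$ gives essentially your argument, with one small difference in how the Cauchy estimate is reached: instead of telescoping $d(x_m,x_n)\le\sum_{i=n}^{m-1}d(x_{i+1},x_i)$, the paper first proves the displacement inequality $d(x_1,x_2)\le\frac{1}{1-k}\left[d(x_1,Sx_1)+d(x_2,Sx_2)\right]$ for arbitrary points and then applies it to the pair $S^nx_0,\,S^mx_0$, obtaining the bound $\frac{k^n+k^m}{1-k}\,d(x_0,Sx_0)$ in place of your $\frac{k^n}{1-k}\,d(x_0,Sx_0)$. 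Both bounds tend to zero and both routes are standard; the paper's version has the advantage of generalizing verbatim to the $T$-contraction setting (where the telescoping sum would instead estimate $d(TS^nx_0,TS^mx_0)$ in exactly the same way), while yours is the more economical derivation for the classical statement. Your identification of the fixed point via continuity of $S$ also matches what the paper's argument reduces to once $T$ is the identity, since then sequential convergence of $T$ is automatic and no passage to subsequences is needed.
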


After this classical result Kannan in [2] analyzed a substantially
new type of contractive condition. Since then there have been
many theorems dealing with mappings satisfying various types of
contractive inequalities. Such conditions involve linear and
nonlinear expressions (rational, irrational, and of general
type). The intrested reader who wants to know more about this
matter is recommended to go deep into the survey articles by
Rhoades [5,6,7] and Meszaros [4], and into the references therein.

Another result on fixed points for contractive-type mapping is
generally attributed to Edelstein (1962) who actually obtained
slightly more general versions.

In the general setting of compact metric spaces this result runs
as followes (see [3, Theorem 2.2]).

\begin{thm}
 Let $(X,d)$ be a compact metric space and $S:X \longrightarrow
 X$ be a contractive $($for every $x,y \in X$ such that $x \neq y$; $d(Sx,Sy) < d(x,y)$$)$. Then $S$ has a unique
 fixed point in $X$, and for any $x_{0} \in X$ the sequence of
 iterates $\{S^{n}x_{0}\}$ converges to this fixed point.
\end{thm}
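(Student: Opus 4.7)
The plan is to extract a fixed point as a minimizer of the continuous functional $x\mapsto d(x,Sx)$ on the compact space $X$, and then to upgrade convergence on a subsequence (guaranteed by compactness) to convergence of the whole iteration sequence via the strict monotonicity forced by the contractive condition.

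\textbf{Step 1 (uniqueness and continuity).} First I would dispatch uniqueness in one line: two distinct fixed points $x,y$ would give $d(x,y)=d(Sx,Sy)<d(x,y)$, a contradiction. I would also record that $S$ is continuous — indeed $1$-Lipschitz — since $d(Sx,Sy)\le d(x,y)$ for all $x,y$.

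\textbf{Step 2 (existence of a fixed point).} Define $\varphi:X\to[0,\infty)$ by $\varphi(x)=d(x,Sx)$. This is continuous, and $X$ is compact, so $\varphi$ attains its infimum at some $x^*\in X$. I would then argue by contradiction: if $Sx^*\neq x^*$, then applying the contractive inequality with the pair $(x^*,Sx^*)$ gives
\[
\varphi(Sx^*)=d(Sx^*,S(Sx^*))<d(x^*,Sx^*)=\varphi(x^*),
\]
violating the minimality of $\varphi(x^*)$. Hence $Sx^*=x^*$.

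\textbf{Step 3 (convergence of the iterates).} Fix $x_0\in X$ and set $x_n=S^n x_0$. If some $x_n$ equals $x^*$ we are done, so assume $x_n\neq x^*$ for all $n$. Then the contractive hypothesis gives $d(x_{n+1},x^*)=d(Sx_n,Sx^*)<d(x_n,x^*)$, so the sequence $a_n:=d(x_n,x^*)$ is strictly decreasing and bounded below by $0$; let $L=\lim a_n\ge 0$. By compactness of $X$, some subsequence $x_{n_k}$ converges to a point $y\in X$. By continuity of $d(\cdot,x^*)$ we have $d(y,x^*)=L$, and by continuity of $S$ we have $Sx_{n_k}\to Sy$, whence $d(Sy,x^*)=\lim a_{n_k+1}=L$. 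If $y\neq x^*$, the contractive inequality yields $d(Sy,Sx^*)<d(y,x^*)$, i.e.\ $L<L$, impossible. Thus $y=x^*$, so $L=0$, and since $\{a_n\}$ is monotone with a subsequence tending to $0$ the whole sequence tends to $0$, i.e.\ $x_n\to x^*$.

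\textbf{Main obstacle.} The only non-routine point is Step 3: compactness only gives convergence along a subsequence, and one has to leverage the contractive inequality a second time (applied to the cluster point and the fixed point) to force that subsequential limit to be $x^*$ itself. The monotonicity of $a_n$ then promotes this to convergence of the full sequence; without that monotonicity one could not rule out oscillation between distinct cluster points.
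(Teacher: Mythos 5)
Your proof is correct, and it is essentially the same argument the paper uses: the paper does not prove Theorem 1.2 directly but obtains it as the case $T=\mathrm{id}$ of Theorem 2.9, whose proof likewise minimizes $\varphi(y)=d(TSy,Ty)$ over the compact space to produce the fixed point and then shows the nonincreasing sequence $a_n=d(TS^{n}x_0,Tx)$ has limit $0$ by extracting a convergent subsequence and applying the contractive inequality at the cluster point. Specializing $T$ to the identity recovers your Steps 2 and 3 verbatim, so there is nothing substantively different to compare.
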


The aim of this paper is to analyze the existence of fixed points
for mapping $S$ defined on a complete (compact) metric space
$(X,d)$ such that is $T-contraction$ ($T-contractive$). See
Theorem 2.6 and Theorem 2.9 below.

First we introduce the $T-contraction$ and $T-contractive$
functions and then we extend the Banach-Contraction Principle and
Theorem 1.2.

At the end of paper some properties and examples concerning this
kind of contractions and contractives are given.

In the sequel, $\Bbb{N}$ will represent the set of natural
numbers.


\section{Definitions and Main Results}

The following theorems (Theorem 2.6 and Theorem 2.9) are the main
results of this paper. In the first, we define some new
definitions.

\begin{defn}
 Let $(X,d)$ be a metric space and $T,S:X \longrightarrow X$ be
 two functions. A mapping $S$ is said to be a $T-contraction$ if
 there exists $k \in ]0,1[$ such that for
 \[d(TSx,TSy) \leq kd(Tx,Ty) \quad\quad \forall x,y \in X.\]
\end{defn}
\textbf{Note 1}. By taking $Tx=x$ (T is identity function)
$T-contraction$ and contraction are equivalent.

The following example shows that $T-contraction$ functions maybe
not contraction.

\begin{exam}
Let $X=[1,+\infty)$ with metric induced by $\Bbb{R}$:
$d(x,y)=|x-y|$. We consider two mappings $T,S:X \longrightarrow
X$ by $Tx=\frac{1}{x}+1$ and $Sx=2x$. Obviously $S$ is not
contraction but $S$ is $T-contraction$, because:
\[ \big{|}TSx-TSy\big{|}=\big{|}\frac{1}{2x}+1-\frac{1}{2y}-1\big{|}=\big{|}\frac{1}{2x}-\frac{1}{2y}\big{|}
\leq \frac{1}{2}\big{|}\frac{1}{x}-\frac{1}{y}\big{|}
=\frac{1}{2}\big{|}\frac{1}{x}+1-\frac{1}{y}-1\big{|}=\frac{1}{2}\big{|}Tx-Ty\big{|}.\]
\end{exam}

\begin{defn}
 Let $(X,d)$ be a metric space. A mapping $T:X \longrightarrow X$
 is said sequentially convergent if we have, for every sequence
 $\{y_{n}\}$, if $\{Ty_{n}\}$ is convergence then $\{y_{n}\}$ also is
 convergence.
\end{defn}

\begin{defn}
 Let $(X,d)$ be a metric space. A mapping $T:X \longrightarrow X$
 is said subsequentially convergent if we have, for every sequence
 $\{y_{n}\}$, if $\{Ty_{n}\}$ is convergence then $\{y_{n}\}$ has a convergent subsequence.
\end{defn}

\begin{prop}
If $(X,d)$ be a compact metric space, then every function $T:X
\longrightarrow X$ is subsequentially convergent and every
continuous function $T:X \longrightarrow X$ is sequentially
convergent.
\end{prop}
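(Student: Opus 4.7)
The plan is to dispatch the two claims separately; both rest on sequential compactness of $X$.

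For the first claim, given any sequence $\{y_n\}\subset X$, sequential compactness of a compact metric space directly produces a convergent subsequence. The hypothesis that $\{Ty_n\}$ converges is not even needed in the argument, and no assumption on $T$ is used: every $T:X\to X$ trivially satisfies the defining property of subsequential convergence.

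For the second claim, assume $T$ is continuous and $Ty_n\to z$ for some $z\in X$. First I would extract a convergent subsequence $y_{n_k}\to y$ by compactness, then apply continuity of $T$ to obtain $Ty_{n_k}\to Ty$, and combine this with $Ty_n\to z$ (so $Ty_{n_k}\to z$) to deduce $Ty=z$. To upgrade this to $y_n\to y$, I would invoke the standard criterion that, in a sequentially compact metric space, a sequence converges to $y$ iff every one of its convergent subsequences has limit $y$.

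The main obstacle is precisely this last step. Running the same argument with another convergent subsequence $y_{m_k}\to y'$ only yields $Ty'=z$, and forcing $y=y'$ requires some form of injectivity of $T$: otherwise an oscillation between two distinct points of the fibre $T^{-1}(z)$ prevents $\{y_n\}$ from converging even though $\{Ty_n\}$ is constant. I would therefore either need to read an implicit one-to-one hypothesis into the statement, or restrict the conclusion to the case when $T^{-1}(z)$ is a singleton; under such a supplement, the three-line argument above goes through cleanly.
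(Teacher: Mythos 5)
Your treatment of the first claim is correct and is the only reasonable argument: sequential compactness of $X$ hands you a convergent subsequence of $\{y_n\}$ with no input from $T$ whatsoever, so every map is subsequentially convergent. (The paper offers no proof of this proposition at all, so there is nothing to compare against.) Your diagnosis of the second claim is also correct, and the difficulty you flag is real: the statement as printed is simply false. Take $X=\{0,1\}$ with the induced metric, $T$ the constant map $Tx=0$, and $y_n$ alternating between $0$ and $1$; then $T$ is continuous, $\{Ty_n\}$ is constant hence convergent, but $\{y_n\}$ diverges. So continuity alone cannot yield sequential convergence, exactly because the uniqueness-of-subsequential-limits step collapses when a fibre $T^{-1}(z)$ contains two points. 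Your proposed repair --- adding injectivity --- is the right one: for a continuous injection $T$ on a compact metric space, every convergent subsequence of $\{y_n\}$ has a limit $y$ with $Ty=z$, injectivity makes this limit unique, and the standard criterion (in a sequentially compact space, a sequence with a unique subsequential limit converges to it) finishes the argument. Note that this is consistent with how the paper actually uses the proposition: both Theorem 2.6 and Theorem 2.9 assume $T$ is one-to-one, so the intended applications survive; the defect is in the unqualified statement of the proposition, not in your argument.
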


\begin{thm}
 Let $(X,d)$ be a complete metric space and $T:X \longrightarrow
 X$ be a one-to-one, continuous and subsequentially convergent mapping. Then for every $T-contraction$
 continuous function $S:X \longrightarrow X$, $S$ has a unique fixed point. Also if
 $T$ is a sequentially convergent, then for each $x_{0}\in
 X$, the sequence of iterates $\{S^{n}x_{0}\}$ converges to this
 fixed point.
\end{thm}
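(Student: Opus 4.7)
The plan is to mimic the classical Banach proof, but work in the image under $T$ rather than in $X$ directly, since the contraction hypothesis gives us control over $d(Tx,Ty)$, not over $d(x,y)$.

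Fix an arbitrary $x_{0}\in X$ and define the Picard sequence $x_{n}=S^{n}x_{0}$. Applying the $T$-contraction inequality repeatedly I would first show that
\[d(Tx_{n+1},Tx_{n})=d(TSx_{n},TSx_{n-1})\leq k\,d(Tx_{n},Tx_{n-1})\leq\cdots\leq k^{n}d(Tx_{1},Tx_{0}),\]
from which, by the standard telescoping and geometric-series estimate, $\{Tx_{n}\}$ is a Cauchy sequence in $X$. Completeness yields a point $y\in X$ with $Tx_{n}\to y$.

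Next I would use the hypothesis that $T$ is subsequentially convergent: from the convergence of $\{Tx_{n}\}$ it follows that some subsequence $\{x_{n_{j}}\}$ converges to a point $v\in X$. Continuity of $T$ gives $Tx_{n_{j}}\to Tv$, and combining with $Tx_{n_{j}}\to y$ forces $Tv=y$. To show $Sv=v$, I would apply $S$ (continuous) and then $T$ (continuous) to the subsequence: $x_{n_{j}+1}=Sx_{n_{j}}\to Sv$, hence $Tx_{n_{j}+1}\to T(Sv)$; but $\{Tx_{n_{j}+1}\}$ is also a subsequence of the Cauchy sequence $\{Tx_{n}\}$, so its limit equals $y=Tv$. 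Thus $T(Sv)=Tv$, and the injectivity of $T$ gives $Sv=v$.

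For uniqueness, if $u$ and $v$ are both fixed points of $S$, then $d(Tu,Tv)=d(TSu,TSv)\leq k\,d(Tu,Tv)$, whence $Tu=Tv$ and therefore $u=v$ since $T$ is one-to-one. Finally, under the stronger assumption that $T$ is sequentially convergent, the convergence of $\{Tx_{n}\}$ yields convergence of $\{x_{n}\}$ itself; its limit must agree with the subsequential limit $v$, so $S^{n}x_{0}\to v$ for every initial point $x_{0}$.

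The main subtlety—not a technical obstacle so much as the conceptual key—is that without further information the sequence $\{x_{n}\}$ need not be Cauchy in $X$; the $T$-contraction condition only forces $\{Tx_{n}\}$ to be Cauchy. The whole point of introducing subsequential convergence of $T$ is precisely to transfer a fixed point back from the $T$-image to $X$, and the injectivity and continuity of $T$ are what allow us to identify $Sv=v$ from $T(Sv)=Tv$.
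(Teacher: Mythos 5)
Your proposal is correct and follows essentially the same route as the paper: make $\{TS^{n}x_{0}\}$ Cauchy, use completeness plus subsequential convergence of $T$ to extract a convergent subsequence of $\{S^{n}x_{0}\}$, and then use continuity of $S$ and $T$ together with injectivity of $T$ to identify the limit as a fixed point. The only cosmetic difference is that you establish the Cauchy property by the standard telescoping geometric-series estimate, whereas the paper first derives the auxiliary inequality $d(Tx_{1},Tx_{2})\leq\frac{1}{1-k}[d(Tx_{1},TSx_{1})+d(TSx_{2},Tx_{2})]$ and applies it to iterates; you also spell out the uniqueness argument that the paper merely asserts.
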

\begin{proof}
For every $x_{1}$ and $x_{2}$ in $X$,
\begin{align*}
 d(Tx_{1},Tx_{2}) & \leq d(Tx_{1},TSx_{1})+d(TSx_{1},TSx_{2})+d(TSx_{2},Tx_{2}) \\&
 \leq d(Tx_{1},TSx_{1})+k d(Tx_{1},Tx_{2})
+d(TSx_{2},Tx_{2}),
\end{align*}
 so
\begin{equation}
d(Tx_{1},Tx_{2}) \leq
\frac{1}{1-k}[d(Tx_{1},TSx_{1})+d(TSx_{2},Tx_{2})]
\end{equation}
Now select $x_{0} \in X$ and define the iterative sequence
$\{x_{n}\}$ by $x_{n+1}=Sx_{n}$ (equivalently,
$x_{n}=S^{n}x_{0}$), $n=1,2,3,...$. By (2.0.1) for any indices
$m,n \in \Bbb{N}$,
\begin{align*}
& d(Tx_{n},Tx_{m})= d(TS^{n}x_{0},TS^{m}x_{0}) \\
& \leq
\frac{1}{1-k}[d(TS^{n}x_{0},TS^{n+1}x_{0})+d(TS^{m+1}x_{0},TS^{m}x_{0})]
\\ & \leq \frac{1}{1-k}[k^{n}d(Tx_{0},TSx_{0})+k^{m}d(TSx_{0},Tx_{0})]
\end{align*}
hence
\begin{equation}
d(TS^{n}x_{0},TS^{m}x_{0}) \leq
\frac{k^{n}+k^{m}}{1-k}d(Tx_{0},TSx_{0}).
\end{equation}
Relation (2.0.2) and condition $0 < k < 1$ show that
$\{TS^{n}x_{0}\}$ is a Cauchy sequence, and since $X$ is complete
there exists $a \in X$ such that
\begin{equation}\underset{n
\rightarrow \infty}\lim TS^{n}x_{0}=a.
\end{equation}
Since $T$ is subsequentially convergent $\{S^{n}x_{0}\}$ has a
convergent subsequence. So, there exist $b \in X$ and
$\{n_{k}\}_{k=1}^{\infty}$ such that $\underset{k \rightarrow
\infty}\lim S^{n_{k}}x_{0}=b$. Hence, $\underset{k \rightarrow
\infty}\lim TS^{n_{k}}x_{0}=Tb$, and by (2.0.3), we conclude that
\begin{equation}
Tb=a.
\end{equation}
Since $S$ is continuous and $\underset{k \rightarrow \infty}\lim
S^{n_{k}}x_{0}=b$, then $\underset{k \rightarrow \infty}\lim
S^{n_{k}+1}x_{0}=Sb$ and so
\[\underset{k \rightarrow \infty}\lim
TS^{n_{k}+1}x_{0}=TSb.\] Again by (2.0.3), $\underset{k
\rightarrow \infty}\lim TS^{n_{k}+1}x_{0}=a$ and therefore
$TSb=a$. Since $T$
is one-to-one and by (2.0.4), Sb=b. So, $S$ has a fixed point.\\
Since $T$ is one-to-one and $S$ is $T-contraction$, $S$ has a
unique fixed point.
\end{proof}

\begin{rem}
By above theorem and taking $Tx=x$ (T is identity function), we
can conclude Theorem 1.1.
\end{rem}

\begin{defn}
 Let $(X,d)$ be a metric space and $T,S:X \longrightarrow X$ be
 two functions. A mapping $S$ is said to be a $T-contractive$ if
 for every $x,y \in X$ such that $Tx \neq Ty$ then $d(TSx,TSy) < d(Tx,Ty)$.
\end{defn}

Obviously, every $T-contraction$ function is $T-contractive$ but
the converse is not true. For example if $X=[1,+\infty)$,
$d(x,y)=|x-y|$, $Sx=\sqrt{x}$ and $Tx=x$ then $S$ is
$T-contractive$ but $S$ is not $T-contraction$.

\begin{thm}
 Let $(X,d)$ be a compact metric space and $T:X \longrightarrow
 X$ be a one-to-one and continuous mapping. Then for every $T-contractive$ function
 $S:X \longrightarrow X$, $S$ has a unique fixed point. Also for any $x_{0} \in X$ the sequence of
 iterates $\{S^{n}x_{0}\}$ converges to this fixed point.
\end{thm}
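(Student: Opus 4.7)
The plan is to adapt Edelstein's argument, using $T$ to transport the compact-metric-space machinery onto the ``$T$-picture'' of $X$. First I would verify that $S$ is continuous: since $X$ is compact and $T$ is a continuous injection, $T(X)$ is compact and $T^{-1}:T(X)\to X$ is continuous. If $x_n\to x$ then by $T$-contractivity $d(TSx_n,TSx)\le d(Tx_n,Tx)\to 0$ (the cases $Tx_n=Tx$ give $x_n=x$ by injectivity), so $TSx_n\to TSx$ and thus $Sx_n=T^{-1}(TSx_n)\to T^{-1}(TSx)=Sx$. This continuity is what lets me use compactness arguments freely.

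Next, I would prove existence of a fixed point by minimizing the continuous function $f:X\to[0,\infty)$, $f(x)=d(Tx,TSx)$, which attains its infimum at some $z\in X$ because $X$ is compact. I claim $Sz=z$. If not, then $Tz\ne TSz$ (since $T$ is one-to-one), so the $T$-contractive condition applied to the pair $(z,Sz)$ gives
\[
f(Sz)=d(TSz,TSSz)<d(Tz,TSz)=f(z),
\]
contradicting minimality. Hence $z=Sz$. Uniqueness is immediate: if $Sw=w$ and $w\ne z$, then $Tw\ne Tz$, so $d(Tz,Tw)=d(TSz,TSw)<d(Tz,Tw)$, a contradiction.

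For the convergence of iterates, set $x_n=S^n x_0$ and $a_n=d(Tx_n,Tz)$. Whenever $x_n\ne z$, the $T$-contractive inequality yields $a_{n+1}=d(TSx_n,TSz)<d(Tx_n,Tz)=a_n$, so $\{a_n\}$ is non-increasing and converges to some $L\ge 0$. The main obstacle is ruling out $L>0$; I would handle this by compactness. Extract a subsequence with $x_{n_k}\to w$; then by continuity of $T$ and $S$, $a_{n_k}\to d(Tw,Tz)=L$ and $a_{n_k+1}\to d(TSw,TSz)=L$. If $L>0$ then $w\ne z$, hence $Tw\ne Tz$, and the $T$-contractive condition forces $d(TSw,TSz)<d(Tw,Tz)$, i.e.\ $L<L$, which is absurd. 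Therefore $L=0$, so $Tx_n\to Tz$ inside the compact set $T(X)$, and applying the continuous $T^{-1}$ gives $x_n\to z$, completing the proof.
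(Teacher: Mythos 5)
Your proposal is correct and follows essentially the same route as the paper: first establish continuity of $S$, then obtain the fixed point by minimizing $y\mapsto d(TSy,Ty)$ over the compact space, and finally show the distances $a_n=d(TS^nx_0,Tz)$ decrease to $0$ via a convergent subsequence. The only cosmetic difference is that you invoke the continuity of $T^{-1}$ on the compact image $T(X)$ where the paper argues via subsequences and the ``sequentially convergent'' property of $T$; these are the same fact in different clothing, and your version also makes the uniqueness argument explicit, which the paper leaves implicit.
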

\begin{proof}
\textbf{Step 1.} In the first we show that $S$ is continuous.

 Let $\underset{n \rightarrow \infty}\lim x_{n}=x$. We prove that $\underset{n \rightarrow \infty}\lim
 Sx_{n}=Sx$. Since $S$ is $T-contractive$ $d(TSx_{n},TSx) \leq
 d(Tx_{n},Tx)$ and this shows that $\underset{n \rightarrow \infty}\lim TSx_{n}=TSx$ (because $T$ is
 continuous).\\
 Let $\{Sx_{n_k}\}$ be an arbitary convergence subsequence of
 $\{Sx_{n}\}$. There exists a $y \in X$ such that $\underset{k \rightarrow \infty}\lim
 Sx_{n_k}=y$. Since $T$ is continuous so, $\underset{k \rightarrow \infty}\lim
 TSx_{n_k}=Ty$. By $\underset{n \rightarrow \infty}\lim
 TSx_{n}=TSx$, we conclude that $TSx=Ty$. Since $T$ is one-to-one
 so, $Sx=y$. Hence, every convergence subsequence of $\{Sx_{n}\}$
 converge to $Sx$. Since $X$ is a compact metric space $S$ is
 continuous.

 \textbf{Step 2.} Since $T$ and $S$ are continuous, the function
 $\varphi: X \longrightarrow [0,+\infty)$ defined by
 $\varphi(y)=d(TSy,Ty)$ is continuous on $X$ and hence by
 compactness attains its minimum, say at $x \in X$. If $Sx \neq x$
 then
 \[\varphi(Sx)=d(TS^{2}x,TSx) < d(TSx,Tx)\]
 is a contradiction. So $Sx=x$.\\
 Now let $x_{0} \in X$ and set $a_{n}=d(TS^{n}x_{0},Tx)$. Since
 \[a_{n+1}=d(TS^{n+1}x_{0},Tx)=d(TS^{n+1}x_{0},TSx)
 \leq d(TS^{n}x_{0},Tx)=a_{n},\]
 then $\{a_n \}$ is a nonincreasing sequence of nonnegative real numbers and so has a limit, say
 $a$.\\
 By compactness, $\{TS^{n}x_{0}\}$ has a convergent subsequence $\{TS^{n_k}x_{0}\}$; say
 \begin{equation}
 \lim TS^{n_k}x_{0}=z.
 \end{equation}
 Since $T$ is sequentially convergence (by Note 2) for a $w \in X$ we
 have
 \begin{equation}
 \lim S^{n_k}x_{0}=w.
 \end{equation}
 By (2.0.5) and (2.0.6), $Tw=z$. So $d(Tw,Tx)=a$. Now we show that
 $Sw=x$. If $Sw \neq x$, then
 \begin{align*}
 a=\lim d(TS^{n}x_{0},Tx) & =\lim d(TS^{n_k}x_{0},Tx)=d(TSw,Tx) \\
 & =d(TSw,TSx) < d(Tw,Tx)=a
 \end{align*}
 that is contradiction. So $Sw=x$ and hence,
 \[a=\lim d(TS^{n_k+1}x_{0},Tx)=d(TSw,Tx)=0. \]
 Therefore, $\lim TS^{n}x_{0}=Tx_{0}$. Since $T$ is
 sequentially convergence (by Proposition 2), then $\lim S^{n}x_{0}=x$.
\end{proof}

Similar to Remark 2.7, we can conclude Theorem 1.2.

\begin{rem}
 In Theorem 2.6 (Theorem 2.9) if $S^{n}$ is
 $T-contraction$$(T-contractive)$, then $S^{n}$ has a unique fixed
 point and we conclude that $S$ has a unique fixed point. So, we
 can replace $S$ by $S^{n}$ in Theorem 2.6 (Theorem 2.9).
\end{rem}

We know that for some function $S$, $S$ is not
$T-contraction$$(T-contractive)$, but for some $n \in \Bbb{N}$
$S^{n}$ is $T-contraction$ $(T-contractive)$ (see the following
example).


\section{Examples and Applications}

In this section we have some example about Theorem 2.6 and Theorem
2.9 and the conditions of these theorems, and show that we can not
omit the conditions of these theorems.

\begin{exam}
 Let $X=[0,1]$ with metric induced by $\Bbb{R}$: $d(x,y)=|x-y|$.
 Obviously $(X,d)$ be a complete metric space and the function $S:X \longrightarrow
 X$ by $Sx=\frac{x^{2}}{\sqrt{2}}$ is not contractive. If $T:X \longrightarrow X$ define by
 $Tx=x^{2}$ then $S$ is $T-contractive$, because:
 \[ \big{|} TSx-TSy \big{|}=\big{|} \frac{x^{4}}{2}-\frac{y^{4}}{2}\big{|} =\frac{1}{2} \big{|} x^{2}+y^{2} \big{|}
 \big{|} Tx-Ty \big{|} < \big{|} Tx-Ty \big{|}.\]
 So by Theorem 2.9 $S$ has a unique fixed point.
\end{exam}

\begin{exam}
 Let $X=[0,1]$ with metric induced by $\Bbb{R}$:
 $d(x,y)=|x-y|$. Obviously $(X,d)$ is a compact metric space. Let
 $T,S:X \longrightarrow X$ define by $Tx=x^{2}$ and
 $Sx=\frac{1}{2}\sqrt{1-x^{2}}$. Clearly $S$ is not contraction,
 but $S$ is $T-contraction$ and hence is $T-contractive$. Also $T$ is one-to-one. So by Theorem
 2.8 $S$ has a unique fixed point.
\end{exam}

\begin{exam}
 Let $X=[1,+\infty)$ with metric induced by $\Bbb{R}$:
 $d(x,y)=|x-y|$, thus, since $X$ is a closed subset of $\Bbb{R}$,
 it is a complete metric space. We define $T,S:X \longrightarrow
 X$ by $Tx=\ln x+1$ and $Sx=2\sqrt{x}$. Obviously, for every $n \in
 \Bbb{N}$, $S^{n}$ is not contraction. But we have,
 \[\big{|} TSx-TSy \big{|} =\frac{1}{2}\big{|} \ln x-\ln y \big{|}=\frac{1}{2}\big{|} Tx-Ty \big{|} \leq
 \frac{1}{2}\big{|} Tx-Ty \big{|}.\]
 Hence, $S$ is $T-contraction$.

 Also $T$ is one-to-one and subsequentially convergent. Therefore, by
 Theorem 2.5 $S$ has a unique fixed point.
\end{exam}

The following examples show that we can not omit the conditions of
Theorem 2.6 and Theorem 2.9.

In the following note we have two examples such that show that we
can not omit the one-to-one of $T$ in Theorem 2.6 and Theorem 2.9.
In first example $S$ has more than one fixed point and in the
second example $S$ has not a fixed point.\\

\textbf{Note 2.} Let $X=\{0,\frac{1}{2},1 \}$ with metric
$d(x,y)=|x-y|$. For functions $T_{1},S_{1}:X \longrightarrow X$
defined by $T_{1}x= \left\lbrace
  \begin{array}{c l}
     0 & \text{ $x=0,1$} \\
     \frac{1}{2} & \text{$x=\frac{1}{2}$}
  \end{array}
  \right.$ and $S_{1}x= \left\lbrace
  \begin{array}{c l}
     0 & \text{ $x=0,\frac{1}{2}$} \\
     1 & \text{$x=1$}
  \end{array}
  \right.$ we have $T_{1}$ is subsequentially convergent and since
  \[ \big{|} T_{1}S_{1}x-T_{1}S_{1}y \big{|} \leq \frac{1}{2} \big{|} T_{1}x-T_{1}y \big{|} \:\: (\forall x,y \in X),\]
  $S_{1}$ is $T_{1}-contraction$. But $T_{1}$ is not one-to-one and $S_{1}$ has two fixed
  points.\\
 If we define the functions $T_{2},S_{2}:X \longrightarrow X$ by
 $T_{2}x= \left\lbrace
  \begin{array}{c l}
     0 & \text{ $x=0,1$} \\
     \frac{1}{2} & \text{$x=\frac{1}{2}$}
  \end{array}
  \right.$
  and\\
  $S_{2}x= \left\lbrace
  \begin{array}{c l}
     1 & \text{ $x=0,\frac{1}{2}$} \\
     0 & \text{$x=1$}
  \end{array}
  \right.$
  then we have $T_{2}$ is subsequentially convergent and since
  \[ \big{|} T_{2}S_{2}x-T_{2}S_{2}y \big{|} \leq \frac{1}{2} \big{|} T_{2}x-T_{2}y \big{|} \:\: (\forall x,y \in X),\]
  $S_{2}$ is $T_{2}-contraction$. But $T_{2}$ is not one-to-one and $S_{2}$ has not a fixed
  point.\\

The following example shows that we can not omit the
subsequentially convergent of $T$ in Theorem 2.6.

\begin{exam}
 Let $X=[0,+\infty)$ with metric induced by $\Bbb{R}$: $d(x,y)=|x-y|$.
 Obviously $(X,d)$ be a complete metric space. For functions $T,S:X \longrightarrow
 X$ defined by $Sx=2x+1$ and $Tx=\exp(-x)$ we have, $T$ is one-to-one and
 $S$ is $T-contraction$ because:
\begin{align*}
 \big{|} TSx-TSy \big{|} & =\big{|} \exp(-2x-1)-\exp(-2y-1) \big{|} =\frac{1}{e}\big{|}
 \exp(-x)+\exp(-y)\big{|} \\ &
 \big{|} \exp(-x)-\exp(-y)\big{|} \leq \frac{2}{e}\big{|} \exp(-x)-\exp(-y) \big{|}=\frac{2}{e}\big{|} Tx-Ty \big{|}.
\end{align*}
 But $T$ is not subsequentially convergent $(Tn \underset{n\longrightarrow\infty}\longrightarrow 0$ but
 $\{n\}_{1}^{\infty}$ has not any convergence subsequence$)$ and $S$ has not a fixed
 point.
\end{exam}


Email:

A-Beiranvand@Arshad.araku.ac.ir

S-Moradi@araku.ac.ir

M-Omid@Arshad.araku.ac.ir

H-Pazandeh@Arshad.araku.ac.ir

\end{document}